\newcommand{\F}{\mathbb{F}}
\newcommand{\Z}{\mathbb{Z}}
\newcommand{\PG}{\operatorname{PG}}
\newcommand{\AG}{\operatorname{AG}}
\newcommand{\vek}[1]{\mathbf{#1}}
\newcommand{\mat}[1]{\mathbf{#1}}
\newcommand{\sdist}{\mathrm{d}_{\mathrm{S}}}
\newcommand{\mindist}{\mathrm{d}}
\newcommand{\imat}{\mathbf{I}}
\newcommand{\points}{\mathcal{P}}
\newcommand{\card}[1]{\##1}
\newcommand{\frobenius}{\mathrm{F}}
\newcommand{\wham}{\mathrm{w}}
\newcommand{\mset}[1]{\mathfrak{#1}}
\newcommand{\PD}{\operatorname{PD}}
\newcommand{\LPD}{\operatorname{LPD}}
\newtheorem{proposition}{Proposition}
\newtheorem{corollary}{Corollary}
\newtheorem{example}{Example}
\newtheorem{theorem}{Theorem}
\begin{document}

\title{Projective Divisible Binary Codes}

\author[Heinlein]{Daniel Heinlein}
\address{Daniel Heinlein, University of Bayreuth, 95440 Bayreuth, Germany, daniel.heinlein@uni-bayreuth.de}

\author[Honold]{Thomas Honold}
\address{Thomas Honold, Zhejiang University, 310027 Hangzhou, China, honold@zju.edu.cn}

\author[Kiermaier]{Michael Kiermaier}
\address{Michael Kiermaier, University of Bayreuth, 95440 Bayreuth, Germany, michael.kiermaier@uni-bayreuth.de}

\author[Kurz]{Sascha Kurz}
\address{Sascha Kurz, University of Bayreuth, 95440 Bayreuth, Germany, sascha.kurz@uni-bayreuth.de}

\author[Wassermann]{Alfred Wassermann}
\address{Alfred Wassermann, University of Bayreuth, 95440 Bayreuth, Germany, alfred.wassermann@uni-bayreuth.de}

\subjclass{Primary 94B05; Secondary 51E23}
\keywords{divisible codes, projective codes, partial spreads}

\maketitle

\begin{abstract}For which positive integers $n,k,r$ does there exist a
  linear $[n,k]$ code $C$ over $\F_q$ with all codeword weights divisible by
  $q^r$ and such that the columns of a generating matrix of $C$ are
  projectively distinct? The motivation for studying this problem
  comes from the theory of partial spreads, or subspace codes with
  the highest possible minimum distance, 
  since the set of holes of a partial spread of $r$-flats in
  $\PG(v-1,\F_q)$ corresponds to a $q^r$-divisible code with $k\leq v$.
  In this paper we provide an introduction to this problem
  and report on new results for $q=2$.
\end{abstract}

%%\maketitle  
  
\section{Introduction}

Let $q=p^e>1$ be a prime power and $\Delta>1$ an integer.  A linear
code $C$ over $\F_q$ is said to be \emph{$\Delta$-divisible} if the
Hamming weight $\wham(\vek{c})$ of every codeword $\vek{c}\in C$ is
divisible by $\Delta$. The classical examples are self-dual codes over
$\F_2$, $\F_3$ and $\F_4$, which have $\Delta\in\{2,4\}$, $\Delta=3$
and $\Delta=2$, respectively. While self-dual codes or, slightly more
general, $[n,n/2]$ codes cannot have other divisors by the
Gleason-Pierce-Ward Theorem \cite[Ch.~9.1]{huffman-pless03}, there
exist interesting examples in (necessarily) smaller dimension for
every pair $q,\Delta$ in which $\Delta=p^f$ is a power of the
characteristic of $\F_q$. The most well-known example is the family of
$q$-ary $\left[\frac{q^k-1}{q-1},k,q^{k-1}\right]$ simplex codes (dual Hamming
codes), which have constant weight $\Delta=q^{k-1}=p^{e(k-1)}$.  
In the remaining case $\Delta=mp^f$ with $m>1$ and $\gcd(m,p)=1$, a
\emph{$\Delta$-divisible} code is necessarily an $m$-fold replicated
code \cite[Th.~1]{ward2001divisible_survey}, reducing this case to the
former.

Our motivation for studying divisible codes comes from Finite Geometry
and the recently established field of Subspace Coding. A \emph{partial
  $r$-spread} in the projective geometry
$\PG(v-1,\F_q)=\PG(\F_q^v/\F_q)$ is a set of pairwise disjoint
$r$-subspaces of $\F_q^v/\F_q$.\footnote{Here $r$ refers
  to the vector space dimension of
  the subspace (the geometric dimension as a flat of $\PG(v-1,\F_q)$
  being $r-1$), but
  ``disjoint'' means disjoint as point sets in $\PG(v-1,\F_q)$ (the
  corresponding vector space intersection being $\{\vek{0}\}$).} 
To avoid trivialities, we assume $r\geq 2$.

In the case $r\mid v$ the existence of \emph{$r$-spreads}, i.e.,
partial $r$-spreads partitioning the point set of $\PG(v-1,\F_q)$ is
well-known, but in the case $r\nmid v$ (in which spreads cannot exist) the
maximum size of a partial $r$-spread in $\PG(v-1,\F_q)$ is
generally unknown. The problem of determining this maximum size forms
a special case of the so-called \emph{Main Problem of Subspace
  Coding}, which arose from the elegant Koetter-Kschischang-Silva
model for Random Linear Network Coding
\cite{koetter-kschischang08,silva-kschischang-koetter08,silva-kschischang-koetter10}
and is akin to the Main Problem of classical Coding Theory. It asks
for the maximum number of subspaces of $\F_q^v/\F_q$ at mutual
distance $\geq d$ in the subspace metric defined by
$\sdist(X,Y)=\dim(X+Y)-\dim(X\cap Y)$.  If attention is restricted to
subspaces of constant dimension $r$ and $d=2r$ is the maximum
possible distance under this restriction, we recover the original
geometric problem.

We will not discuss the known results about maximal partial spreads in
this paper, for which we refer interested readers to the recent
exhaustive survey \cite{smt:cost}. Instead we will describe
the link between partial spreads and divisible codes
(Section~\ref{sec:link}), formulate a ``Main Problem'' for
projective divisible codes (Section~\ref{sec:main}), discuss some general
divisible code constructions (Section~\ref{sec:const}),
and report on new results for the particular case $q=2$, $\Delta=2^r$
(Section~\ref{sec:q=2}).

\section{Linking Partial Spreads and Divisible
Codes}\label{sec:link}

The link between partial spreads and divisible codes is provided by
the concept of a ``hole'' of a family $\mathcal{S}$ of subspaces of
$\PG(v-1,\F_q)$. A point of $\PG(v-1,\F_q)$ (i.e., a $1$-dimensional
subspace of $\F_q^v/\F_q$) is said to be a \emph{hole} of $\mathcal{S}$ if it
is not covered by (i.e., not incident with) a member of $\mathcal{S}$.
Further, we recall from \cite{dodunekov1998codes,tsfasman-vladut95}
that associating with a linear $[n,k]$-code $C$ the multiset
$\mset{K}_C$ of points generated by the columns of any generating matrix
$\mat{G}$ of $C$ yields a one-to-one correspondence between monomial
equivalence classes of linear $[n,k]$-codes over $\F_q$ without
universal zero coordinate and isomorphism classes of $n$-element
spanning multisets of points in $\PG(k-1,\F_q)$. The relation
$C\mapsto\mset{K}_C$ preserves the
metric in the sense that the weight $\wham(\vek{c})$ of a nonzero codeword
$\vek{c}=\vek{a}\mat{G}$ and the multiplicity $\mset{K}_C(H)
=\sum_{P\in H}\mset{K}_C(P)$ of the corresponding hyperplane
$H=\vek{a}^\perp=\{\vek{x}\in\F_q^k;a_1x_1+\dots+a_kx_k=0\}$ are
related by
$\wham(\vek{a}\mat{G})=n-\mset{K}_C(\vek{a}^\perp)=\mset{K}_C(\points
\setminus\vek{a}^\perp)$,  
where $\points$ denotes the point set of $\PG(k-1,\F_q)$. The code $C$
is $\Delta$-divisible iff the multiset $\mset{K}_C$ is
\emph{$\Delta$-divisible} in the sense that the multiplicity
$\mset{K}_C(A)$ of any ($k-1$)-dimensional affine subspace
$A$ of $\PG(k-1,\F_q)$ is divisible by $\Delta$.

The code $C$ is said to be \emph{projective} if $\mset{K}_C$ is a
set or, equivalently, the $n$ columns of $\mat{G}$ are projectively
distinct. In terms of the minimum distance of the dual code
this can also be expressed as $\mindist(C^\perp)\geq 3$.

\begin{proposition}[compare {\cite[Th.~8]{smt:cost}}]
  \label{prop:link}
  Let $\mathcal{S}$ be a partial $r$-spread in $\PG(v-1,\F_q)$,
  $\mset{H}$ its set of holes, and $C_{\mset{H}}$ any linear $[n,k]$ code over
  $\F_q$ associated with $\mset{H}$ as defined above. Then
  \begin{enumerate}[(i)]
  \item $C_{\mset{H}}$ is projective and $q^{r-1}$-divisible;
  \item the parameters of $C_{\mset{H}}$ satisfy $n=\frac{q^v-1}{q-1}
    -\card{\mathcal{S}}\cdot\frac{q^r-1}{q-1}$ and $k\leq v$.
  \end{enumerate}
\end{proposition}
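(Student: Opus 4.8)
I would dispatch part (ii) first, since it is a pure point count that also fixes notation, and then treat part (i), whose only real content is the $q^{r-1}$-divisibility. For part (ii), the ambient space $\PG(v-1,\F_q)$ has $\frac{q^v-1}{q-1}$ points, and each member $S\in\mathcal{S}$, being an $r$-dimensional subspace, carries $\frac{q^r-1}{q-1}$ of them. Since the members of $\mathcal{S}$ are pairwise disjoint as point sets, the covered points number exactly $\card{\mathcal{S}}\cdot\frac{q^r-1}{q-1}$, so the holes number $n=\card{\mset{H}}=\frac{q^v-1}{q-1}-\card{\mathcal{S}}\cdot\frac{q^r-1}{q-1}$. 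For the bound $k\le v$, let $U\le\F_q^v$ be the subspace spanned by vector representatives of the points of $\mset{H}$; by the correspondence recalled above one has $k=\dim U$, and $U\subseteq\F_q^v$ forces $k\le v$.

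Projectivity in part (i) is immediate: by construction each hole is a single point, so $\mset{K}_{C_{\mset{H}}}=\mset{H}$ is a set and not a proper multiset, which is exactly the projectivity condition. For divisibility I would work in the ambient space and transfer. By the dictionary stated before the proposition, every codeword weight equals the number of holes lying off some hyperplane of $\PG(k-1,\F_q)$, viewed as the subgeometry on $U$; realizing such a hyperplane as a trace $H'\cap U$ for an ambient hyperplane $H'$ of $\PG(v-1,\F_q)$ with $U\not\subseteq H'$, and using that all holes lie in $U$, this weight equals $\card{(\mset{H}\setminus H')}$. It therefore suffices to show that the number of holes off an arbitrary hyperplane $H'$ of $\PG(v-1,\F_q)$ is divisible by $q^{r-1}$.

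The heart of the matter is the following count. The number of points off $H'$ is $q^{v-1}$. For each member $S\in\mathcal{S}$, either $S\subseteq H'$, contributing no point off $H'$, or $S\not\subseteq H'$, in which case $S\cap H'$ is a hyperplane of $S$ and $S$ contributes $\frac{q^r-1}{q-1}-\frac{q^{r-1}-1}{q-1}=q^{r-1}$ points off $H'$. Writing $s$ for the number of members of $\mathcal{S}$ contained in $H'$, the covered points off $H'$ number $(\card{\mathcal{S}}-s)\,q^{r-1}$, whence the holes off $H'$ number $q^{v-1}-(\card{\mathcal{S}}-s)\,q^{r-1}=q^{r-1}\bigl(q^{v-r}-\card{\mathcal{S}}+s\bigr)$, a manifest multiple of $q^{r-1}$.

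The one step I expect to require genuine care — the main obstacle, such as it is — is the passage between the two projective spaces: the code $C_{\mset{H}}$ lives in $\PG(k-1,\F_q)$, while the spread and its holes live in $\PG(v-1,\F_q)$. I would verify that every hyperplane of the subgeometry on $U$ really does arise as a trace $H'\cap U$ with $U\not\subseteq H'$, by the easy extension argument that a $(k-1)$-dimensional subspace of $U$ is contained in some ambient hyperplane meeting $U$ in exactly that subspace. This guarantees that no holes are spuriously gained or lost in the transfer, so that the universal divisibility computed above applies to every codeword weight; the remaining manipulations are the routine substitutions of the standard subspace point-counts.
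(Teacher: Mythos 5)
Your proof is correct and is essentially the paper's own argument in geometric rather than coding-theoretic clothing: the paper decomposes the weight $q^{v-1}$ of a nonzero codeword of the $v$-dimensional simplex code into blockwise contributions $\wham(\vek{x}\mat{G}_j)\in\{0,q^{r-1}\}$ from the spread members, which is exactly your count of points off an ambient hyperplane, with the holes making up the difference. Your explicit hyperplane-lifting step (realizing every hyperplane of the subgeometry spanned by $\mset{H}$ as a trace $H'\cap U$) does precisely the work that the paper handles by taking $\mat{H}$ as a generating matrix of $C_{\mset{H}}$ ``in the broader sense,'' with possibly dependent rows, so the two routes coincide.
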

\begin{proof}
  All assertions except the $q^{r-1}$-divisibility of $C_{\mset{H}}$ are
  straightforward. For the proof of the latter let
  $\mathcal{S}=\{S_1,\dots,S_M\}$, $M=\card{\mathcal{S}}$, and
  consider a generating
  matrix
  \begin{equation*}
    \mat{G}=
    \begin{pmatrix}
      \mat{G}_1&\mat{G}_2&\dots&\mat{G}_M&\mat{H}
    \end{pmatrix}
  \end{equation*}
  of the $q$-ary $\left[\frac{q^v-1}{q-1},v,q^{v-1}\right]$ simplex code,
  partitioned in such a way that the columns of $\mat{G}_j$ account
  for all points in $S_j$ and those of $\mat{H}$ for
  all points in $\mset{H}$. 
  For a nonzero codeword $\vek{c}=\vek{x}\mat{G}$ of the simplex code we have
  %%A nonzero codeword $\vek{c}=\vek{x}\mat{G}$ of the simplex code satisfies
  \begin{equation*}
    q^{v-1}=\wham(\vek{x}\mat{G})=\sum_{j=1}^M\wham(\vek{x}\mat{G}_j)
    +\wham(\vek{x}\mat{H}).
  \end{equation*}
  Since each matrix $\mat{G}_j$ generates an $r$-dimensional
  simplex code (in the broader sense, i.e., the rows of $\mat{G}_j$
  need not be linearly independent), we have
  $\wham(\vek{x}\mat{G}_j)\in\{0,q^{r-1}\}$. Since $v-1\geq r-1$, it follows that
  $\wham(\vek{x}\mat{H})$ is divisible by $q^{r-1}$ as well. But $\mat{H}$
  generates $C_{\mset{H}}$ and the result follows.
\end{proof}

Proposition~\ref{prop:link} looks rather innocent at the first glance,
but in fact it provides a powerful tool for bounding the size of
partial spreads. This is already illustrated by the following

\begin{corollary}
  \label{cor:beutel}
  If $v\geq 2r+1$ and $v\bmod r=1$, the maximum size of a partial $r$-spread in
  $\PG(v-1,\F_q)$ is
  \begin{equation*}
    \left\lfloor\frac{q^v-1}{q^r-1}\right\rfloor-(q-1)
    =q^{v-r}+q^{v-2r}+\dots+q^{r+1}+1,
  \end{equation*}
  with corresponding number of holes equal to $q^r$.
\end{corollary}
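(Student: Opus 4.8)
The plan is to split the statement into a lower and an upper bound and to exploit Proposition~\ref{prop:link}. First I would record the arithmetic of the case $v\bmod r=1$: writing $v=mr+1$ with $m\geq 2$ (the hypothesis $v\geq 2r+1$ excludes $m=1$), one has $q^v-q=q(q^{mr}-1)$, which is divisible by $q^r-1$, while $0\leq q-1<q^r-1$; hence $\bigl\lfloor\frac{q^v-1}{q^r-1}\bigr\rfloor=\frac{q^v-q}{q^r-1}$ and the claimed value $\card{\mathcal{S}}=\frac{q^v-q}{q^r-1}-(q-1)$ is a positive integer. Substituting it into the formula $n=\frac{q^v-1}{q-1}-\card{\mathcal{S}}\cdot\frac{q^r-1}{q-1}$ of Proposition~\ref{prop:link}(ii) gives $n=q^r$, and since each unit change of $\card{\mathcal{S}}$ changes $n$ by $\frac{q^r-1}{q-1}$, the number of holes always satisfies $n\equiv 1\pmod{\frac{q^r-1}{q-1}}$. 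Thus the corollary is equivalent to the two assertions: (a) every partial $r$-spread that is not a spread has at least $q^r$ holes; and (b) there is a partial $r$-spread with exactly $q^r$ holes.

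For the upper bound (a) I would use Proposition~\ref{prop:link}(i): the hole set $\mset{H}$ is a nonempty projective $q^{r-1}$-divisible set, so every hyperplane multiplicity $m_H$ satisfies $m_H\equiv n\pmod{q^{r-1}}$, and since the holes span the ambient $\PG(k-1,\F_q)$ (where $k=\dim C_{\mset{H}}$) no hyperplane carries all of them, whence $m_H\leq n-q^{r-1}$. Combined with $n\equiv 1\pmod{\frac{q^r-1}{q-1}}$, the only admissible lengths below $q^r$ are $n=1,\,1+\frac{q^r-1}{q-1},\,\dots,\,1+(q-2)\frac{q^r-1}{q-1}$. For $q=2$ the argument closes immediately, because the sole candidate is $n=1$, and a single point is not $q^{r-1}$-divisible for $r\geq 2$; hence $n\geq q^r$. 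For general $q$ the remaining intermediate lengths must also be excluded, i.e.\ one must rule out short projective $q^{r-1}$-divisible $\F_q$-codes of those lengths. I would attack this through the two standard counting identities for the $m_H$ (first and second moments $\sum_H m_H=n\,[k-1]_q$ and $\sum_H m_H(m_H-1)=n(n-1)[k-2]_q$, with $[j]_q=\frac{q^j-1}{q-1}$), turning them into a non-negativity/integrality constraint on the slacks $(n-m_H)/q^{r-1}\geq 1$. I expect this to be the main obstacle: squeezing the sharp threshold $q^r$ out of the moment and congruence data for all $q$ is precisely the content of the classical Beutelspacher bound, and the variance estimate has to be used together with the fact that the slacks are positive integers; for a self-contained treatment one may instead invoke the known length spectrum of $q^{r-1}$-divisible codes.

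For the lower bound (b) I would give an explicit construction whose holes form an affine space. Fix an $(r+1)$-dimensional subspace $W\leq\F_q^v$ and an $r$-dimensional $\pi<W$; the intended holes are the $q^r$ points of $W$ outside $\pi$, an $\AG(r,\F_q)$, which one checks directly to be $q^{r-1}$-divisible. I would take $\pi$ as one spread element and partition all points \emph{outside} $W$ into $r$-subspaces skew to $W$. Such a transversal partition exists because $v\geq 2r+1$ leaves room ($\dim(S+W)=2r+1\leq v$): first Desarguesianly spread a complement of $\pi$ into $r$-subspaces, reducing to the case $v=2r+1$ on each $(2r+1)$-dimensional ``slab'', and there realize the partition by an MRD (Gabidulin) set of $q^{r+1}$ linear maps $\F_q^{\,r}\to\F_q^{\,r+1}$ whose pairwise differences have full rank $r$, so that their graphs are pairwise disjoint and cover the exterior of $W$ in the slab. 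A dimension count shows the resulting partial spread has size $1+q^{v-r}+\dots+q^{r+1}$ with hole set exactly $W\setminus\pi$, so it attains the bound; verifying disjointness and the count is routine once the construction is set up.
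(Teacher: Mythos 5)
Your reduction of the corollary to (a) a lower bound of $q^r$ on the number of holes and (b) an explicit construction is the right frame, but part (a) — the heart of the matter — has a genuine gap for every $q>2$, and you acknowledge it yourself: after the congruence $n\equiv 1\pmod{\frac{q^r-1}{q-1}}$ you still must exclude the lengths $n=1+h\cdot\frac{q^r-1}{q-1}$ for $1\leq h\leq q-2$, and your two proposed ways out (a second-moment/variance argument, or ``invoking the known length spectrum of $q^{r-1}$-divisible codes'') are, respectively, not carried out and circular, since that length spectrum \emph{is} the Beutelspacher-type bound being proved. The missing idea is much simpler than a variance computation and needs only the first moment. Suppose $h\leq q-2$ and write $n=n_1+hq^{r-1}$ with $n_1=1+h(1+q+\dots+q^{r-2})$; then $n_1<q^{r-1}$ (using $r\geq 2$), so every codeword weight of $C_{\mset{H}}$, being a multiple of $q^{r-1}$ not exceeding $n$, is at most $hq^{r-1}$. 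On the other hand, no coordinate of $C_{\mset{H}}$ is identically zero, so the average weight over all codewords equals $n(1-1/q)=\frac{q-1-h}{q}+hq^{r-1}>hq^{r-1}$, forcing a codeword of weight larger than $hq^{r-1}$: contradiction. This is exactly how the paper closes the general-$q$ case; note that it uses only the divisibility and the absence of zero coordinates — neither projectivity nor your observation that no hyperplane contains all holes is needed.

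Concerning (b), your construction is sound and is essentially the paper's: the paper builds the partial spread inductively from column spaces of matrices $\left(\begin{smallmatrix}\imat_r\\ \mat{A}\end{smallmatrix}\right)$, where $\mat{A}$ runs through a matrix representation of $\F_{q^{v-r}}$ with the last $v-2r$ columns stripped off — precisely your full-rank-difference (MRD) graphs — ending with one extra $r$-subspace $\pi$ inside an $(r+1)$-subspace $W$, so that the hole set is $W\setminus\pi$ of size $q^r$. One slip to fix: you should apply the Desarguesian spread to a complement of $W$ (dimension $(m-1)r$, divisible by $r$), not to a complement of $\pi$ (dimension $(m-1)r+1$, which cannot be partitioned into $r$-subspaces); with that correction your slab decomposition, disjointness and counting all go through. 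As it stands, however, the proposal proves the corollary only for $q=2$.
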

\begin{proof}
  It is readily shown by induction that there exists a partial
  $r$-spread with the required property, the induction step being
  provided by generating matrices in $\F_q^{v\times r}$ of the form
  $\left(
    \begin{smallmatrix}
      \imat_r\\\mat{A}
    \end{smallmatrix}\right)$, where $\mat{A}\in\F_q^{(v-r)\times r}$
  runs through a matrix representation of $\F_{q^{v-r}}$ with the
  last $v-2r$ columns stripped off. The subspaces of the partial
  spread are the column spaces of the matrices $\left(
    \begin{smallmatrix}
      \imat_r\\\mat{A}
    \end{smallmatrix}\right)$, and the anchor of the induction is
  provided by adding the
  column space of $\left(
    \begin{smallmatrix}
      \mat{0}\\\imat_r\end{smallmatrix}\right)\in\F_q^{(2r+1)\times
    r}$ to the $q^{r+1}$ subspaces obtained for $v=2r+1$ in the same
  way as in the inductive step.

  Conversely, let $\mathcal{S}$ by a partial $r$-spread in
  $\PG(v-1,\F_q)$ and $\mset{H}$ its set of holes. By
  Proposition~\ref{prop:link}, the $[n,k]$ code $C_{\mset{H}}$ is
  $q^{r-1}$-divisible satisfying %%and its length satisfies
  $n=\card{\mset{H}}=1+h\cdot\frac{q^r-1}{q-1}$ 
  for some integer $h$. We must show $h\geq q-1$.

  Assuming $h<q-1$, we have $n=1+h(1+q+\dots+q^{r-1})=n_1+hq^{r-1}$
  with $n_1=1+h(1+q+\dots+q^{r-2})<q^{r-1}$. This implies that the maximum weight
  of $C_{\mset{H}}$ cannot exceed $hq^{r-1}$. But on the other hand, 
  $C_{\mset{H}}$ has average weight
  $n(1-1/q)=\frac{q-1-h}{q}+hq^{r-1}>hq^{r-1}$ and hence also a
  codeword of weight $>hq^{r-1}$. Contradiction.
\end{proof}
Corollary~\ref{cor:beutel} settles the determination of the maximum
size of partial line spreads ($r=2$) in $\PG(v-1,\F_q)$
completely. For $q=2$ also the maximum size of partial plane spreads
($r=3$) in $\PG(v-1,\F_2)$ is known for all $v$. The key ingredient to
this theorem is a computer construction of a partial plane spread of
size $34$ in $\PG(7,\F_2)$. The corresponding number of holes is
$2^8-1=34\cdot 7=17$, and a partial plane spread of size $35$ is
readily excluded with the aid of Proposition~\ref{prop:link}: The
associated projective binary $[10,k]$ code $C_{\mset{H}}$ would be
doubly-even by Proposition~\ref{prop:link}, but such a code does not
exist. For more details on this case and for the best currently
available general upper bounds %%on the sizes of partial spreads 
we refer to \cite{smt:cost}.

\section{The Main Problem for Projective Divisible
  Codes}\label{sec:main}

In this section we formulate the general existence problem for
projective divisible codes with given parameters. In order to be as
general as possible, we note that a divisor $\Delta=p^f$ of a
$p^e$-ary code can be expressed in terms of the alphabet size $q=p^e$ as
$\Delta=q^{f/e}$. Hence, by allowing exponents $r\in\frac{1}{e}\Z^+$
we can subsume all interesting code divisors under the notion of
``$q^r$-divisibility''.\footnote{For example, the divisor $\Delta=2$ of a
  quaternary code corresponds to $r=\frac{1}{2}$.}

Let $\PD(q,r)$ be the set of all pairs of positive integers $(n,k)$,
for which a projective $q^r$-divisible linear $[n,k]$ code over $\F_q$
exists and $$\LPD(q,r)=\bigl\{n\in\Z^+;\exists k\text{ such that
}(n,k)\in\PD(q,r)\bigr\};$$ i.e., $\LPD(q,r)$ is the set of
(realizable) lengths of such codes without restricting the code dimension.
The general existence problem for projective divisible codes amounts
to the determination of the sets $\PD(q,r)$ for all prime powers $q>1$
and all $r\in\Z^+$. Since this is a formidable problem even when $q$
and $r$ are fixed to some small numbers, we try to determine the sets
$\LPD(q,r)$ first.

It turns out that each set $\LPD(q,r)$ contains all but finitely many
integers. Thus there is a well-defined function $\frobenius(q,r)$,
assigning to $q,r$ the largest integer $n$ that is not equal to the
length of a projective $q^r$-divisible linear code over $\F_q$. %%The
%%problem of 
Determining $\frobenius(q,r)$ is in some sense analogous
to the well-known \emph{Frobenius Coin Problem} (see, e.g.,
\cite{brauer1942problem}), which in its simplest form asks for the
largest integer not representable as $a_1n_1+a_2n_2$ with
$a_1,a_2\geq 0$, where $n_1$ and $n_2$ are given relatively prime
positive integers. The solution is $(n_1-1)(n_2-1)-1$, as is easily
shown, and this observation together with the juxtaposition
construction for divisible codes yields an upper bound for
$\frobenius(q,r)$ (and shows that $\frobenius(q,r)$ is well-defined).
Details are contained in the next section. The determination of
$\frobenius(q,r)$ may be seen as the first important step en route to
the solution of the main problem for projective divisible codes.

\section{Constructions}\label{sec:const}

Suppose $C_i$ ($i=1,2$) are linear $[n_i,k_i]$ codes over $\F_q$ with
generating matrices $\mat{G}_i$ (in the broader sense), chosen as follows:
$\mat{G}_1$ and $\mat{G}_2$ have the same number $k$ of rows, and their
left kernels intersect only in $\{\vek{0}\}$. Then
$\mat{G}=(\mat{G}_1|\mat{G}_2)$ generates a linear $[n_1+n_2,k]$ code $C$,
called a \emph{juxtaposition} of $C_1$ and $C_2$. It is clear that $C$
is $q^r$-divisible if $C_1$ and $C_2$ are. If $C_1$ and $C_2$ are
projective, we can force $C$ to be projective as well by choosing
$\mat{G}_i$ appropriately, e.g., $\mat{G}_1=\left(
  \begin{smallmatrix}
    \mat{G}_1'\\\mat{0}
  \end{smallmatrix}\right)$, $\mat{G}_2=\left(
  \begin{smallmatrix}
    \mat{0}\\\mat{G}_2'
  \end{smallmatrix}\right)$, in which case $C$ is just the direct sum
of $C_1$ and $C_2$. This implies that the sets $\LPD(q,r)$ are
additively closed. Of course juxtaposition can be iterated, and hence
we see that in the case $\gcd(n_1,n_2)=1$ we can obtain projective
$q^r$-divisible codes of all lengths $n=a_1n_1+a_2n_2$ with
$a_1,a_2\geq 0$. Hence, choosing for $C_1$ a
$\left[\frac{q^{r+1}-1}{q-1},r+1,q^r\right]$ simplex code and for
$C_2$ a $\left[q^{r+1},r+2,q^r\right]$ first-order (generalized)
Reed-Muller code gives the bound
\begin{equation}
  \label{eq:frob}
  \begin{aligned}
    \frobenius(q,r)&\leq\frac{q^{r+1}-1}{q-1}\cdot q^{r+1}
    -\frac{q^{r+1}-1}{q-1}-q^{r+1}\\
    &=q^{2r+1}+q^{2r}+\dots+q^{r+2}-q^r-q^{r-1}-\dots-1,
  \end{aligned}
\end{equation}
as indicated in the previous section.

The implications of the juxtaposition construction for the sets
$\PD(q,r)$ are less clear, but we note the following. If
$\mset{K}_i$ denotes a set of points in $\PG(k_i-1,\F_q)$ associated
with $C_i$, $m_i$ the maximum dimension of a subspace $X_i$ with
$\mset{K}_i(X_i)=0$ (``empty subspace'') and $m=\max\{m_1,m_2\}$, then
precisely all dimensions $k_1+k_2-m\leq k\leq k_1+k_2$ can be realized by a
disjoint embedding of $\mset{K}_1$ and $\mset{K}_2$ into a common
ambient space, and hence by a projective juxtaposition of $C_1$ and
$C_2$. An example for this can be found in \cite[Th.~2]{smt:netcod16},
where a plane $\PG(2,\F_2)$ and an affine solid $\AG(3,\F_2)$ are
combined in $4$ possible ways to yield all except $1$ isomorphism type
of hole sets of partial plane spreads of size $16$ in
$\PG(6,\F_2)$. Indeed, since the affine solid in its embedding into
$\PG(3,\F_2)$ has a free $3$-subspace, the possible dimensions are
$4\leq k\leq 7$.

Viewed geometrically, the juxtaposition construction is based on
the trivial fact that the sum $\mset{K}_1+\mset{K}_2$ of two $\Delta$-divisible
multisets $\mset{K}_1$, $\mset{K}_2$ is again $\Delta$-divisible. This
observation generalizes, of course, to integral linear combinations,
shows that ($r+1$)-dimensional affine subspaces of $\PG(v-1,\F_q)$ are,
$q^r$-divisible (since $t$-subspaces with $t\geq r+1$ are)
and provides the basis for the \emph{sunflower construction}
\cite{smt:cost}. If $q$ distinct subspaces $S_1,\dots,S_q$ of
dimension at least $r+1$
$\PG(v-1,\F_q)$ pass through
a common $r$-subspace $T$ but are otherwise disjoint,
$S=(S_1\cup\dots\cup S_q)\setminus T$ is $q^r$-divisible. For the proof
note that $S=S_1+\dots+S_q-qT$ as a multiset, and that $qT$ has the divisor
$q\cdot q^{r-1}=q^r$. The construction is especially useful for $q=2$,
in which case it allows ``switching'' an $r$ subspace $T\subset S_1$
into an ($r+1$)-dimensional affine subspace $S_2\setminus
T$.\footnote{For this $S_2\supset T$ is chosen as an ($r+1$)-subspace,
  but can otherwise be arbitrary.} This increases the code length only
by one and can usually be repeated, see: %%as the following example shows.
\begin{example}
  \label{ex:q=r=2,n=19}
  According to R.~L.~Miller's database of binary doubly-even codes
  at \url{http://www.rlmiller.org/de\_codes} there exist precisely $192$
  non-equivalent binary doubly-even codes of length $19$, with all
  dimensions $3\leq k\leq 8$ realizable. However, only $3$ of these
  codes, with parameters $[19,8,4]$, $[19,7,4]$ and $[19,7,8]$, are
  projective. They correspond to the following geometric construction.

  Chose a solid $S$ in $\PG(7,\F_2)$ and $4$ planar quadrangles
  (``affine planes'')
  $A_1$, $A_2$, $A_3$, $A_4$ meeting the solid in $4$ disjoint lines $L_i$. Let
  $L$ be complement of $L_1\cup L_2\cup L_3\cup L_4$ in $S$ (which is
  also a line). Viewed as points of the quotient geometry
  $\PG(\F_2^8/S)$, the planes $A_i$ can be arranged in $3$ distinct
  ways---(i) a planar quadrangle, (ii) a line plus a plane, and (iii)
  $4$ points in
  general position. This gives $3$ inequivalent $19$-sets $A_1\cup
  A_2\cup A_3\cup A_4\cup L$ in
  $\PG(7,\F_2)$ and hence $3$ inequivalent codes with parameters
  $[19,7]$, $[19,7]$ and $[19,8]$. The point sets/codes are doubly-even, since
  they arise from $S$ by switching $L_i$ into $A_i$. The code with
  minimum distance $d=8$ corresponds to Case~(i). It can also be
  obtained by shortening the $[24,12,8]$ Golay code $\mathcal{G}_{24}$
  in $5$ (arbitrary) positions, since
  $d^\perp(\mathcal{G}_{24})=d(\mathcal{G}_{24})=8$ implies
  $d^\perp\geq 3$ for the shortened code.
\end{example}
Another important geometric construction of divisible codes introduced
in \cite{smt:cost} is the
\emph{cone construction}, which increases the divisor from $q^r$ to
$q^{r+1}$ (or, in its most general form using an $s$-dimensional
vertex, to $q^{r+s}$). Let $H$ be a
hyperplane of $\PG(v-1,\F_q)$. A cone $K$ with
vertex $P\notin H$ and base $B\subseteq H$ is defined as the union of
the lines $PQ$ with $Q\in B$. If $B$ is $q^r$-divisible then 
the number of points of $K$ outside any hyperplane through $P$ is
clearly a multiple of $q^{r+1}$, and we may adjust the multiplicity of
$P$ in $K$ without affecting this property. Since the number of points
of $K\setminus\{P\}$ outside every other hyperplane is $(q-1)\card{B}$, it
follows that $K\setminus\{P\}$ is $q^{r+1}$-divisible if
$\card{B}\equiv 0\pmod{q^{r+1}}$, and $K$ is $q^{r+1}$-divisible if
$\card{B}(q-1)\equiv -1\pmod{q^{r+1}}$.
\begin{example}
  \label{ex:cone}
  A projective basis of $\PG(k-1,\F_2)$ corresponds to the binary
  $[k+1,k,2]$ even-weight code and gives via the cone construction a
  self-dual doubly-even $[2k+2,k+1,4]$ code if $k\equiv3\pmod{4}$ and a
  doubly-even $[2k+3,k+1,4]$ code if $k\equiv2\pmod{4}$. Generating
  matrices for $k=6,7$ are as follows:
  \begin{equation*}
    \setlength{\arraycolsep}{1pt}
    \renewcommand{\arraystretch}{.75}
    \left(
      \begin{array}{cccccccccccccc|c}
      %\begin{smallmatrix}
        1&1&1&1&0&0&0&0&0&0&0&0&0&0&0\\
        1&1&0&0&1&1&0&0&0&0&0&0&0&0&0\\
        1&1&0&0&0&0&1&1&0&0&0&0&0&0&0\\
        1&1&0&0&0&0&0&0&1&1&0&0&0&0&0\\
        1&1&0&0&0&0&0&0&0&0&1&1&0&0&0\\
        1&1&0&0&0&0&0&0&0&0&0&0&1&1&0\\\hline
        1&0&1&0&1&0&1&0&1&0&1&0&1&0&1\\
      %\end{smallmatrix}
      \end{array}
    \right),\quad
    \left(
      \begin{array}{cccccccccccccccc}
      %\begin{smallmatrix}
        1&1&1&1&0&0&0&0&0&0&0&0&0&0&0&0\\
        1&1&0&0&1&1&0&0&0&0&0&0&0&0&0&0\\
        1&1&0&0&0&0&1&1&0&0&0&0&0&0&0&0\\
        1&1&0&0&0&0&0&0&1&1&0&0&0&0&0&0\\
        1&1&0&0&0&0&0&0&0&0&1&1&0&0&0&0\\
        1&1&0&0&0&0&0&0&0&0&0&0&1&1&0&0\\
        1&1&0&0&0&0&0&0&0&0&0&0&0&0&1&1\\\hline
        1&0&1&0&1&0&1&0&1&0&1&0&1&0&1&0\\
      %\end{smallmatrix}
      \end{array}
    \right).
  \end{equation*}
  Here $v=k+1$, $H$ is the hyperplane with equation $x_{k+1}=0$, and
  $P=(0:0:\dots:1)$. The reader should recognize the second matrix as
  one of the basic self-dual code constructions for $q=2$. The first
  matrix corresponds to the $5$th isomorphism type of hole sets of
  partial plane spreads of size $16$ in $\PG(6,\F_2)$;
  cf. \cite[Th.~2]{smt:netcod16}.
\end{example}
Several other constructions are known---for example concatenating a
$q$-divisible code over $\F_{q^r}$ with an $r$-dimensional simplex
code over $\F_q$ obviously yields a $q^r$-divisible code---and a
wealth of further examples:
Higher-order (generalized) Reed-Muller codes are divisible by Ax's
Theorem \cite{ax64}, semisimple abelian group algebra codes under
certain conditions by Delsarte-McEliece \cite{delsarte-mceliece} (for
these two theorems see also \cite{ward2001divisible_survey}),
and projective two-weight codes if the weights satisfy
$w_2>w_1+1$ \cite{delsarte1973algebraic}.\footnote{This condition is
  always satisfied if $k\geq 3$ and the code is not a punctured
  simplex code \cite[Prop.~2]{bouyukliev2006projective}.}
For the latter the survey \cite{calderbank1986geometry} is a
particularly useful source.

\section{Results for $q=2$}\label{sec:q=2}

First we determine the length-dimension pairs realizable by a binary
projective $2$-divisible code. The case $r=1$ is the only case, where we can
determine the set $\PD(2,r)$ completely.

\begin{theorem}
  \label{thm:q=2,r=1}
  The set $\PD(2,1)$ consists of all pairs $(n,k)$ of positive
  integers satisfying $k+1\leq n\leq 2^k-1$ and $n\notin\{2^k-3,2^k-2\}$.
\end{theorem}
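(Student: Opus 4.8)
The plan is to prove the two directions—necessity of the stated
constraints, and sufficiency (constructibility of all allowed
pairs)—separately.

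First I would establish necessity. A binary $2$-divisible code is an
even-weight code, and projectivity means the columns of a generating
matrix $\mat{G}$ are projectively distinct nonzero points of
$\PG(k-1,\F_2)$; equivalently, $\mset{K}_C$ is a \emph{set} of points.
The lower bound $n\geq k+1$ follows because the code must have full
dimension $k$ (so $n\geq k$), and $n=k$ would force a monomial image of
the even-weight $[k,k]$ situation, which is impossible; more cleanly,
an even projective code needs at least $k+1$ columns since $k$
projectively distinct points spanning $\PG(k-1,\F_2)$ form a basis,
whose associated code is the full space $\F_2^k$ and is not
even. The upper bound $n\leq 2^k-1$ is immediate, as $\PG(k-1,\F_2)$
has exactly $2^k-1$ points and $\mset{K}_C$ is a set. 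The crux of
necessity is excluding $n\in\{2^k-3,2^k-2\}$. Here $\mset{K}_C$ is the
complement in $\PG(k-1,\F_2)$ of a set of $3$ or $2$ points. I would
argue via the dual/complementary description: a codeword weight equals
$\mset{K}_C(\points\setminus H)$ for a hyperplane $H$, and even-ness of
all weights constrains how the complementary point set (of size $2$ or
$3$) meets hyperplanes. Counting intersections of a $2$- or $3$-point
set with hyperplanes modulo $2$ and comparing with the all-points case
(the simplex code, whose weights are the constant $2^{k-1}$) should
show a parity obstruction, forcing some odd weight. This parity
computation is the step I expect to be the main obstacle, since it
must be carried out uniformly in $k$ and must genuinely distinguish
$\{2^k-3,2^k-2\}$ from the admissible neighbours $2^k-4$ and $2^k-1$.

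For sufficiency I would exhibit, for every admissible $(n,k)$, a
projective even-weight binary code. The even-weight codes correspond
to point sets $\mset{K}_C\subseteq\PG(k-1,\F_2)$ that are
\emph{$2$-divisible}, i.e., meet every affine hyperplane in an even
number of points; equivalently their characteristic vector is
orthogonal to the relevant incidence structure. A convenient sufficient
condition is that $\mset{K}_C$ is a union of ``even'' objects: the
basic even projective code is the $[k+1,k,2]$ even-weight code coming
from a projective basis (used already in Example~\ref{ex:cone}), giving
$n=k+1$. To increase $n$ by one while preserving projectivity and
$2$-divisibility I would use the sunflower/switching idea from
Section~\ref{sec:const} for $q=2$: switching a point (a $1$-subspace)
into an affine line raises the length by exactly one. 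Iterating this
switch lets me climb from $n=k+1$ upward. I expect the range
$k+1\leq n\leq 2^k-4$ to be covered this way for each fixed $k$,
together with the top value $n=2^k-1$ realized by the full point set
(the simplex code, which is projective and $2$-divisible). I would then
separately confirm that $n=2^k-4$ and $n=2^k-1$ are attainable and that
the switching steps never force a repeated column, so projectivity is
maintained throughout.

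Finally I would knit the two directions together and check the small
boundary cases by hand. In particular, for small $k$ the interval
$[k+1,2^k-1]$ minus $\{2^k-3,2^k-2\}$ can degenerate (e.g.\ $k=2$ gives
the empty set, $k=3$ a single value), so I would verify these
explicitly—$k=3$ should yield exactly $n=4$ via the even-weight
$[4,3,2]$ code, matching $2^k-4$—to make sure the general argument and
the exceptional set are consistent at the edges. The overall structure
is thus: (a) full dimension and the easy bounds $k+1\leq n\leq 2^k-1$;
(b) the parity obstruction ruling out $2^k-3,2^k-2$; (c) a base
construction plus repeated switching realizing all remaining lengths;
(d) reconciliation of boundary cases.
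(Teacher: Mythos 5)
Your sufficiency argument contains a genuine gap. Your plan is to climb from $n=k+1$ all the way to $n=2^k-4$ by iterated switching, but each switch requires a \emph{tangent}: a line meeting the current point set $\mset{K}$ in exactly one point, with both other points outside $\mset{K}$ (this is exactly your ``no repeated column'' condition, which you defer rather than prove). A $2$-divisible set has no tangent precisely when its complement is closed under taking joins, i.e.\ is a subspace, and such tangent-free sets exist at every size $2^k-2^m$: the first instance is the affine subspace of size $2^{k-1}$ (complement of a hyperplane). This is not hypothetical: for $k=3$ the unique $2$-divisible spanning $4$-set in $\PG(2,\F_2)$ is the affine plane, from which no switch is possible. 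Consequently the switching induction is only guaranteed to run while $n<2^{k-1}$ (then the complement has more than $2^{k-1}-1$ points and cannot be a proper subspace), and this is exactly how the paper organizes its proof: switching for $k+1\leq n\leq 2^{k-1}$; then \emph{complementation} (the complement of a $2$-divisible set is $2$-divisible, and complements of sets of size at most $2^{k-1}-2$ automatically span), which realizes $2^{k-1}<n\leq 2^k-k-2$; and finally deletion from $\points$ of a projective basis of an $l$-subspace, $2\leq l\leq k-1$, which realizes $2^k-k-1\leq n\leq 2^k-4$. To save your single-mechanism approach you would have to show that at every dangerous size $2^k-2^m$ ($3\leq m\leq k-1$) the switches can be chosen so that the complement never collapses to a subspace; you neither give nor flag this argument, so the claim that switching alone covers the whole range $k+1\leq n\leq 2^k-4$ is unjustified as written.

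Two smaller points. In the necessity part, the complements corresponding to $n\in\{2^k-3,\,2^k-2\}$ have $2$ and $1$ points respectively, not $3$ and $2$; with that corrected, the parity argument you anticipate as ``the main obstacle'' is in fact immediate: the weight attached to a hyperplane $H$ equals $2^{k-1}-\card{(T\setminus H)}$ where $T=\points\setminus\mset{K}$, and for $\card{T}\in\{1,2\}$ one easily finds $H$ missing exactly one point of $T$, giving an odd weight (whereas $\card{T}=3$ forces $T$ to be a line, which is why $2^k-4$ survives). Also your boundary check is off: for $k=2$ the admissible set is $\{3\}$, realized by the $[3,2,2]$ simplex code, not empty, and for $k=3$ it is $\{4,7\}$, two values, not one.
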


\begin{proof}
  It is clear that the stated conditions are necessary for the
  existence of a projective $2$-divisible $[n,k]$ code.

  For the converse we consider $k$ as fixed and use induction on $n$
  in the range $k+1\leq n\leq 2^{k-1}$. The $[k+1,k]$
  even-weight code, which corresponds to a projective basis of
  $\PG(k-1,\F_2)$, provides the base for the induction. Now assume
  that $\mset{K}$ is a $2$-divisible spanning point set in
  $\PG(k-1,\F_2)$ with $k+1\leq n=\card{\mset{K}}<2^{k-1}$. If $\mset{K}$
  has a tangent $L$, we can switch the point of tangency into the
  other two points on $L$ and increase $n$ by one.\footnote{The new
    point set will of course be spanning as well.} If $\mset{K}$ has
  no tangent then the complementary point set
  $\points\setminus\mset{K}$ must be a subspace (since it is closed 
  with respect to taking the join of any two of its points). 
  This can only occur for $n\geq 2^{k-1}$.% concluding this part of the proof.

  Since the complement of a $2$-divisible point set in
  $\PG(k-1,\F_2)$ is $2$-divisible, we get $(n,k)\in\PD(2,1)$ also for 
  $2^{k-1}<n\leq 2^k-k-2$. The proof is concluded by removing from
  $\points$ a projective basis in an $l$-subspace, $2\leq l\leq k-1$, 
  which is $2$-divisible. This covers the range $2^k-k-1\leq n\leq
  2^k-4$ and completes the proof.
\end{proof}

Now we assume $r\geq 2$ and restrict attention to the sets
$\LPD(2,r)$. First we sharpen the simple upper bound \eqref{eq:frob},
which for $q=2$ is $\frobenius(2,r)\leq 2^{2r+2}-3\cdot 2^{r+1}+1$.

\begin{theorem}
  \label{thm:frob2}
  For $k\geq 2$ we have $\frobenius(2,r)\leq 2^{2r}-2^{r-1}-1$.
\end{theorem}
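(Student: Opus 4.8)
The plan is to show that $\LPD(2,r)$ contains every integer $n\geq 2^{2r}-2^{r-1}$; the claimed bound on $\frobenius(2,r)$ is then immediate. Since $\LPD(2,r)$ is additively closed (juxtaposition) and contains both $2^{r+1}-1$ (the $(r+1)$-dimensional simplex code) and $2^{r+1}$ (the first-order Reed--Muller code, i.e.\ an affine $(r+1)$-space), it suffices to realize all $2^{r+1}$ \emph{consecutive} lengths in the window $W=\bigl[\,2^{2r}-2^{r-1},\ 2^{2r}-2^{r-1}+2^{r+1}-1\,\bigr]$. Indeed, once every length in $W$ lies in $\LPD(2,r)$, adding the step $2^{r+1}-1\in\LPD(2,r)$ repeatedly tiles the whole half-line $[\,2^{2r}-2^{r-1},\infty)$, because the step does not exceed the length $2^{r+1}$ of $W$, so consecutive translates of $W$ overlap.

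To populate $W$ I would combine two complementary families. First, juxtaposing $2^{r-1}$ cells, each either a simplex ($2^{r+1}-1$ points) or an affine $(r+1)$-space ($2^{r+1}$ points), realizes exactly the lower stretch $\bigl[\,2^{r-1}(2^{r+1}-1),\,2^{r-1}\cdot 2^{r+1}\,\bigr]=\bigl[\,2^{2r}-2^{r-1},\,2^{2r}\,\bigr]$, anchored by the identity $2^{2r}-2^{r-1}=2^{r-1}(2^{r+1}-1)$. Second, I would use the full space $\Pi=\PG(2r-1,\F_2)$, which has $2^{2r}-1$ points and is $2^r$-divisible (being even $2^{2r-1}$-divisible), together with an $r$-spread of its $2^r+1$ pairwise disjoint $r$-flats $T_0,\dots,T_{2^r}$. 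Embedding this configuration into a larger ambient space and applying the sunflower construction to switch $j$ of the $T_i$ into affine spaces raises the length by one each time while preserving $2^r$-divisibility: the switched configuration is $\Pi+S-2T_i$ as a multiset, where $S$ is an $(r+1)$-flat through $T_i$ meeting $\Pi$ only in $T_i$, and this is $2^r$-divisible because $\Pi$ and $S$ are and $2T_i$ carries the divisor $2^r$. This yields the middle stretch $\bigl[\,2^{2r}-1,\,2^{2r}+2^r\,\bigr]$. The two families together cover $\bigl[\,2^{2r}-2^{r-1},\,2^{2r}+2^r\,\bigr]$, and the very top length of $W$ is reached by juxtaposing $2^{r-1}+1$ cells.

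The main obstacle is the short residual stretch at the top of the window, namely the $2^{r-1}-2$ lengths in $\bigl[\,2^{2r}+2^r+1,\,2^{2r}+3\cdot 2^{r-1}-2\,\bigr]$, which is empty for $r=2$ (so the two families suffice there) but nonempty for $r\geq 3$. These lengths are beyond the reach of both families above: a $2^r$-divisible configuration of length about $2^{2r}$ contains only about $2^r$ pairwise disjoint $r$-flats, so sunflower-switching from a base of this size produces runs of length only about $2^r$, a factor of two short of the $2^{r+1}$ needed to fill $W$ outright; and enlarging a switched petal from dimension $r+1$ to $r+2$ changes the length in a single jump of size $2^{r+1}$, overshooting the residual entirely. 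I would therefore close these last lengths by a separate tailored construction placed at the correct residues modulo $2^{r+1}$ (for instance a suitable sunflower or cone configuration, or the hole set of an appropriate partial $(r+1)$-spread), and then verify that the resulting families overlap with no remaining gap across $W$. Pinning down this residual family so that it lands precisely on the missing residues, and confirming complete coverage of $W$, is the delicate part of the argument.
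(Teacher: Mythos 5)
Your reduction of the theorem to covering a window of $2^{r+1}-1$ consecutive lengths starting at $2^{2r}-2^{r-1}$, followed by propagation with the step $2^{r+1}-1$, is exactly the right frame, and both of your families are sound: mixing simplex and Reed--Muller cells covers $[2^{2r}-2^{r-1},\,2^{2r}]$, and switching the elements of an $r$-spread of $\PG(2r-1,\F_2)$ covers $[2^{2r}-1,\,2^{2r}+2^r]$. For $r=2$ this closes the window and proves $\frobenius(2,2)\leq 13$. But for every $r\geq 3$ your proposal has a genuine gap, which you acknowledge yourself: the $2^{r-1}-2$ lengths in $[2^{2r}+2^r+1,\,2^{2r}+3\cdot 2^{r-1}-2]$, i.e.\ the residue classes $3\cdot 2^{r-1}+1,\dots,2^{r+1}-2$ modulo $2^{r+1}-1$, are reached by neither family, and ``a separate tailored construction'' is a placeholder, not a proof. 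This is not a technicality: if those classes are served only by juxtaposition (class $j$ first realized at $j\cdot 2^{r+1}$), the resulting bound on $\frobenius(2,r)$ is $(2^{r+1}-2)\cdot 2^{r+1}-(2^{r+1}-1)=2^{2r+2}-3\cdot 2^{r+1}+1$, which is exactly the trivial bound \eqref{eq:frob}; so for $r\geq 3$ your argument proves nothing beyond what precedes Theorem~\ref{thm:frob2}. Worse, the fallback tools you name cannot fill the hole: cone constructions over a suitable base of size $\equiv 0$ resp.\ $\equiv -1\pmod{2^{r-s+1}}$ only produce lengths $u\cdot 2^{r+1}$ (class $u$) or $u\cdot 2^{r+1}-1$ (class $u-1$), never beating juxtaposition within a residue class, and hole sets of partial $(r+1)$-spreads in $\PG(v-1,\F_2)$ have lengths $\equiv 2^b-1\pmod{2^{r+1}-1}$ with $0\leq b\leq r$, none of which lies in the missing range.

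The ingredient you are missing --- and the one the paper's proof sketch explicitly invokes alongside switching --- is concatenation. A hyperoval in $\PG(2,\F_{2^r})$ gives a projective \emph{$2$-divisible} code of length $2^r+2$ over $\F_{2^r}$ (every line meets it in $0$ or $2$ points). Concatenating it with the binary $[2^r-1,r]$ simplex code yields a projective $2^r$-divisible binary code of length $(2^r+2)(2^r-1)=2^{2r}+2^r-2$ which, geometrically, is a disjoint union of $2^r+2$ projective $(r-1)$-flats, i.e.\ of $2^r+2$ $r$-subspaces. Switching these subspaces one at a time (each switch is legitimate: the new set is the old one plus an $(r+1)$-subspace minus twice an $r$-subspace, all $2^r$-divisible) produces lengths $2^{2r}+2^r-2+j$ for $j=0,\dots,2^r+2$, covering $[2^{2r}+2^r-2,\,2^{2r}+2^{r+1}]$. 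These representatives sit at exactly your window limit $2^{2r}-2^{r-1}+(\text{class})$ and in particular cover your entire residual stretch. With this third family added, your two families plus concatenation-and-switching close the window for all $r\geq 2$, which is essentially the paper's intended proof; without it, the theorem remains unproved for $r\geq 3$.
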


The proof uses a combination of the switching and concatenation
constructions described in Section~\ref{sec:const} together with the
observation that $n\in\LPD(2,r)$ implies
$n+(2^{r+1}-1)\Z\subseteq\LPD(2,r)$ (juxtaposition with
($r+1$)-dimensional simplex codes).

Theorem~\ref{thm:frob2} is sharp for $r=2$, i.e.,
$\frobenius(2,2)=13$. In fact it is not
difficult to see that a projective doubly-even binary code of length
$n$ does not exist for
$n\leq 6$ and $9\leq n\leq 13$, and hence
$\LPD(2,2)=\{7,8\}\cup\Z_{\geq 14}$.

The case $r=3$ (``triply-even'' codes) was settled in \cite{smt:cost}
with one exception: $\frobenius(2,3)\in\{58,59\}$, and $\LPD(2,3)$
contains
$\{15,16,30,31,32,45,46,\linebreak[2] 47,48,49,50,51\}\cup\Z_{\geq
  60}$ and possibly $59$. The non-existence proof in the remaining
cases uses the methods developed in \cite{kurz17} and adhoc linear
programming bounds derived from the first %%three or 
four MacWilliams
identities.

The existence of a projective triply-even binary code of length
$59$ remains an open question. If such a code exists it must be
constructible from two projective doubly-even codes of lengths $27$
and $32$ using the juxtaposition construction in
\cite[Prop.~19]{betsumiya2012triply}.\footnote{%%One can show that the
The putative code contains a codeword of weight $32$; hence
  \cite[Prop.~22]{betsumiya2012triply} applies.}

Now we are going to give a classification of short projective $2^r$-divisible binary codes for $r \leq 3$.
The case $r=1$ is special as the set of all even-weight words forms a linear subspace of $\F_2^n$, the $[n,n-1]$ \emph{even-weight code}.
Thus, we can produce all types of projective $2$-divisible $[n,k]$ codes by starting with the even-weight code and recursively enumerating the codes $C$ of codimension~1, as long as $C$ is projective and not isomorphic to some previously produced code.
While this somewhat simplistic approach could certainly be improved in various ways, it is good enough to produce the results shown in Table~\ref{tbl:Delta2}.

\begin{table}[htp]
\caption{Classification of projective $2$-divisible binary codes}
\label{tbl:Delta2}
\centering
        $\begin{array}{r||r|rrrrrrrrrrrrr}
        %& & \multicolumn{12}{c}{k} \\
        \multicolumn{1}{c||}{n} & \multicolumn{1}{c|}{\Sigma} & k= & 2 & 3 & 4 & 5 & 6 & 7
& 8 & 9 & 10 & 11 & 12 & 13 \\
        \hline\hline
        3 & 1 & & 1 \\
        4 & 1 & & & 1 \\
        5 & 1 & & & & 1 \\
        6 & 2 & & & & 1 & 1 \\
        7 & 4 & & & 1 & 1 & 1 & 1 \\
        8 & 7 & & & & 2 & 2 & 2 & 1 \\
        9 & 12 & & & & 1 & 4 & 4 & 2 & 1 \\
        10 & 26 & & & & 1 & 6 & 9 & 6 & 3 & 1 \\
        11 & 61 & & & & 1 & 8 & 21 & 18 & 9 & 3 & 1 \\
        12 & 169 & & & & 1 & 11 & 45 & 59 & 35 & 13 & 4 & 1 \\
        13 & 505 & & & & & 12 & 91 & 182 & 141 & 57 & 17 & 4 & 1 \\
        14 & 1944 & & & & & 12 & 191 & 633 & 668 & 318 & 94 & 22 & 5 & 1 
        \end{array}$
\end{table}

The projective binary doubly-even $[n,k]$ codes with lengths $n\leq 26$
($n=26$, $k=12$ not yet finished) have been classified by using the
command \begin{center}\texttt{sage.coding.databases.self\_orthogonal\_binary\_codes()}\end{center}
in SageMath \cite{sagemath}. The result is shown in
Table~\ref{tbl:Delta4}.

 \begin{table}[htp]
 \caption{Classification of projective $4$-divisible binary codes}
 \label{tbl:Delta4}
 \centering
 	$\begin{array}{r||r|rrrrrrrrrrr}
 	\multicolumn{1}{c||}{n} & \multicolumn{1}{c|}{\Sigma} & k= & 3 & 4 & 5 & 6 & 7 & 8 & 9 & 10 & 11 & 12 \\
 	\hline\hline
 	7 & 1 & & 1 \\
 	8 & 1 & & & 1 \\
 	\hline
 	14 & 1 & & & & & 1 \\
 	15 & 4 & & & 1 & 1 & 1 & 2 \\
 	16 & 9 & & & & 2 & 2 & 3 & 2 \\
 	17 & 3 & & & & & 1 & 1 & 1 \\
 	18 & 3 & & & & & 1 & 1 & 1 \\
 	19 & 3 & & & & & & 2 & 1 \\
 	20 & 7 & & & & & & 2 & 4 & 1 \\
 	21 & 24 & & & & & 2 & 7 & 9 & 6 \\
 	22 & 101 & & & & & 3 & 24 & 41 & 24 & 9 \\
 	23 & 503 & & & & 1 & 11 & 83 & 201 & 146 & 50 & 11 \\
 	24 & 1856 & & & & 1 & 15 & 181 & 679 & 663 & 250 & 58 & 9 \\
 	25 & 4972 & & & & & 6 & 234 & 1688 & 2162 & 748 & 121 & 13 \\
 	26 & \mathit{\geq 21843} & & & & & 3 & 376 & 6021 & 11010 & 3920 & 478 & \mathit{\geq 35}
 	\end{array}$
 \end{table}
We note that self-dual doubly-even codes are necessarily projective,
and hence the classification of such codes for a particular $n$ yields
the classification of projective doubly-even $[n,n/2]$ codes. For
example, from \cite{conway-pless-sloane92} we know that there are
exactly $85$ types of such codes for $n=32$.

In \cite{betsumiya2012triply}, the binary $8$-divisible codes of
length $48$ have been classified.  On the first author's web page
\url{http://www.st.hirosaki-u.ac.jp/~betsumi/triply-even/}, all $7647$
types of \emph{self-complementary} (i.e., containing the all-one word)
binary $8$-divisible codes are given explicitly.  From this data, we
have derived the classification of all projective binary $8$-divisible
codes of length up to $48$.  First, the self-complementary ones of
length exactly $48$ are produced by simply going through the list of
all $7647$ codes and checking them for projectivity, which leads to $291$
types of codes.

For all other codes, we note that lengthening to $n=48$ (padding
codewords with zeros) and then augmenting by the all-one word of
length $48$, a binary self-complementary (not necessarily projective)
$8$-divisible code is produced.  Therefore we can produce all codes
by going through the list of $7647$ codes $C$, enumerating all
codimension~1 subcodes $C'$ of $C$ not containing the all-one word
(their number is $2^{\dim(C) - 1}$), removing all-zero coordinates,
and checking the resulting code for projectivity. No code is lost in
this way, but it may happen that the same isomorphism type of a code
is produced several times. Filtering the list of codes
for isomorphic copies produced the result shown in Table~\ref{tbl:Delta8}.

\begin{table}[htp]
\caption{Classification of projective $8$-divisible binary codes}
\label{tbl:Delta8}
\centering
$\begin{array}{r||r|rrrrrrrrrrrrr}
	\multicolumn{1}{c||}{n} & \multicolumn{1}{c|}{\Sigma} & k= & 4 & 5 & 6 & 7 & 8 & 9 & 10 & 11 & 12 & 13 & 14 & 15 \\
	\hline\hline
	15 & 1 & & 1 \\
	16 & 1 & & & 1\\
	\hline
	30 & 1 & & & & & & 1 \\
	31 & 6 & & & 1 & 1 & 1 & 2 & 1 \\
	32 & 11 & & & & 2 & 2 & 3 & 3 & 1 \\
	\hline
	45 & 6 & & & & & & 2 & 1 & 1 & 1 & 1 \\
	46 & 51 & & & & & & 6 & 18 & 14 & 8 & 4 & 1\\
	47 & 856 & & & & 1 & 11 & 100 & 299 & 274 & 122 & 40 & 8 & 1 \\
	48 & 2973 & & & & 1 & 15 & 211 & 921 & 1071 & 529 & 173 & 44 & 7 & 1 
\end{array}$
\end{table}

We have the following constructions for projective $2^r$-divisible
binary $[n,k]$ codes. The codes are described in terms of their
associated point sets in $\PG(k-1,\F_2)$.

\begin{itemize}
	\item $n=2^{r+1}-1$: A projective $r$-flat ($[2^{r+1}-1,r+1]$
          simplex code)
	\item $n=2^{r+1}$: An affine $(r+1)$-flat ($[2^{r+1}, r+2]$
          1st-order RM code) 
	\item $n=2^{r+2}-2$: The (unique) disjoint union of two
          projective $r$-flats, of ambient space dimension $k=2r+2$
	\item $n=2^{r+2}-1$: The disjoint union of a projective
          $r$-flat $F$ and an affine $(r+1)$-flat $A$. We get one type
          of code for each intersection dimension
          $s\in\{0,\ldots,r+1\}$ of $F$ with the hyperplane at
          infinity of $A$.  The ambient dimension is $k = 2r + 3 - s$.
          In the case $s = r+1$, we simply get a projective
          ($r+1$)-flat, which is even $2^{r+1}$-divisible.

          A further code is given by the set of $7$ projective $(r-1)$-flats passing
          through a common $(r-2)$-flat $V$ such that the image modulo
          $V$ is a projective basis. The ambient dimension is
          $k=r+5$.
	\item $n=2^{r+2}$: The disjoint union of two affine
          $(r+1)$-flats $A_1$ and $A_2$.  There are two types of such
          unions for each $k \in \{r+3,\ldots,2r+3\}$ and a single
          type for $k = 2r+4$.  One of the types for $k=r+3$ actually
          is an affine $(r+2)$-flat, which is even
          $2^{r+1}$-divisible.

          There are two more types: Let $\{S_1,\ldots,S_8\}$ be a set
          of $8$ projective $(r-1)$-flats passing through a common
          $(r-2)$-flat $V$, such that the image modulo $V$ is a
          projective basis. Then
          $(S_1\cup \ldots\cup S_8) \setminus V$ yields a suitable
          code with $k = r+6$.

          Furthermore, let $X$ be the disjoint union of maximum
          possible dimension of $\PG(1,\F_2)$ and a projective
          basis of $\PG(3,\F_2)$. Then $\card{X} = 8$ and
          $\dim\langle X\rangle = 6$.  Now let $V$ be a projective
          $(r-2)$-flat disjoint from $\langle X\rangle$.  Then
          $(\bigcup_{P\in X}\langle P, V\rangle ) \setminus V$ yields
          a suitable code with $k = r+5$.
\end{itemize}
Note that the three constructions involving an ($r-1$)-subspace $V$
(``vertex'') are examples of the generalized cone construction (with a
vertex of dimension $s=r-1$) mentioned in Section~\ref{sec:const}.

For $r\in\{1,2,3\}$, the above constructions cover all types of codes
of the corresponding lengths, with the exception of $r=1$, $n=6$, where additionally the even-weight code shows up.

For $n=3(2^{r+1}-1) = 2^{r+2} + 2^{r+1} - 3$, suitable codes can be produced as the disjoint union of three projective $r$-flats.
This yields a unique type of code for each ambient space dimension $k\in\{2r+2,\ldots,3r+3\}$.
In the case $k=2r+2$, the resulting code is a two-weight code with weights $2^{r+1}$ and $2^{r+1} + 2^r$.
However, for all $r\in\{1,2,3\}$, there are projective $2^r$-divisible codes different from this construction.
The most interesting case is $r=3$, $n=45$, where only a single further code shows up.
It is another $[45,8]$ two-weight code with weights $16$ and $24$; see
\cite[Th.~4.1]{haemers-peeters-rijckevorsel17-1}. The associated point
set $\mset{K}$ in $\PG(7,\F_2)$ consists of a projective basis
$P_1,\dots,P_9$ and the $\binom{9}{2}=36$ remaining points on the
lines $P_iP_j$.
Furthermore, it is worth mentioning that also in the case $r=2$, $n=21$ there is a second $[21,6]$ two-weight with weights $8$ and $12$, see~\cite{bouyukliev2006projective}.

For further information on $r=2$, $n=15$ see \cite{smt:netcod16} and on $r=2$, $n=17$ see \cite[Sect.~1.6.1]{smt:cost}.
A further settled case worth mentioning is $r=3$, $n=51$, see \cite[Lem.~24]{smt:cost}.
In that case, there is a unique code, which can be constructed as the concatenation of an ovoid in $\PG(3,\F_4)$ with the binary $[3,2]$ simplex code.

\section*{Acknowledgement}

The authors are grateful to Robert L.~Miller for explaining how to use
his database of doubly-even codes in SageMath, and to Koichi Betsumiya
and Akihiro Munemasa for additional information on the classification
of triply-even codes. Thomas Honold was supported by the National Natural
Science Foundation
of China under Grant 61571006.
The authors would like to acknowledge the financial support provided by COST --
\emph{European Cooperation in Science and Technology}.
The first, fourth, and fifth author were supported in part by the grant KU 2430/3-1 and WA 1666/9-1
-- Integer Linear Programming Models for Subspace Codes and Finite 
Geometry -- from the German Research Foundation.
 
% \bibliography{strings,th,mathe,partial_spreads_and_vsp_rev}

\begin{thebibliography}{10}
\providecommand{\url}[1]{\texttt{#1}}
\providecommand{\urlprefix}{URL }

\bibitem{ax64}
Ax, J.: Zeroes of polynomials over finite fields.
%American Journal of Mathematics
Amer. J. Math.
86,  255--261 (1964)

\bibitem{betsumiya2012triply}
Betsumiya, K., Munemasa, A.: On triply even binary codes.
%Journal of the London Mathematical Society $\mathrm{(2)}$
J. Lond. Math. Soc. $\mathrm{(2)}$
86(1),  1--16 (2012)

\bibitem{bouyukliev2006projective}
Bouyukliev, I., Fack, V., Willems, W., Winne, J.: Projective two-weight codes
  with small parameters and their corresponding graphs.
  % Designs, Codes and Cryptography
  Des. Codes Cryptogr.
  41(1),  59--78 (2006)

\bibitem{brauer1942problem}
Brauer, A.: On a problem of partitions.
%American Journal of Mathematics
Amer. J. Math.
64(1),
   299--312 (1942)

\bibitem{calderbank1986geometry}
Calderbank, R., Kantor, W.: The geometry of two-weight codes.
%Bulletin of the London Mathematical Society
Bull. London Math. Soc.
18(2),  97--122 (1986)

\bibitem{delsarte1973algebraic}
Delsarte, P.: An algebraic approach to the association schemes of coding
  theory. Philips research reports supplements (10),  103 (1973)

\bibitem{conway-pless-sloane92}
Conway, J.H., Pless, V., Sloane, N.J.A.: The binary self-dual codes of length
  up to $32$: A revised enumeration.
 % Journal of Combinatorial Theory, Series~A
 J. Combin. Theory Ser. A 
60,  183--195 (1992)

\bibitem{delsarte-mceliece}
Delsarte, P., McEliece, R.J.: Zeroes of functions in finite abelian group
  algebras.
  %American Journal of Mathematics
Amer. J. Math.
  98,  197--224 (1976)

\bibitem{dodunekov1998codes}
Dodunekov, S., Simonis, J.: Codes and projective multisets.
%The Electronic Journal of Combinatorics 
Electron. J. Combin.
5(R37),  1--23 (1998)

\bibitem{smt:netcod16}
Honold, T., Kiermaier, M., Kurz, S.: Classification of large partial plane
  spreads in {$\PG(6,\F_2)$} and related combinatorial objects (Apr 2016),
  submitted for publication. Preprint arXiv:1606.07655

\bibitem{smt:cost}
Honold, T., Kiermaier, M., Kurz, S.: Partial spreads and vector space
  partitions (Nov 2016), preprint arXiv:1611.06328

\bibitem{haemers-peeters-rijckevorsel17-1}
Haemers, W.H., Peeters, R., van Rijckevorsel, J.M.: Binary codes of strongly regular graphs.
%Designs, Codes and Cryptography
  Des. Codes Cryptogr.
17, 187--209 (1999)

\bibitem{huffman-pless03}
Huffman, W.C., Pless, V.: Fundamentals of Error-Correcting Codes. Cambridge
  University Press (2003)

\bibitem{koetter-kschischang08}
Koetter, R., Kschischang, F.: Coding for errors and erasures in random network
  coding.
  %IEEE Transactions on Information Theory
   IEEE Trans. Inform. Theory 
  54(8),  3579--3591 (Aug 2008)

\bibitem{kurz17}
Kurz, S.: {Packing vector spaces into vector spaces}.
%The Australasian Journal of Combinatorics
Australas. J. Combin.
68(1) (2017), to appear

\bibitem{silva-kschischang-koetter08}
Silva, D., Kschischang, F., Koetter, R.: A rank-metric approach to error
  control in random network coding.
  %IEEE Transactions on Information Theory
   IEEE Trans. Inform. Theory 
  54(9),  3951--3967 (Sep 2008)

\bibitem{silva-kschischang-koetter10}
Silva, D., Kschischang, F., Koetter, R.: Communication over finite-field matrix
  channels.
  %IEEE Transactions on Information Theory 
   IEEE Trans. Inform. Theory 
  56(3),  1296--1306 (Mar
  2010)

\bibitem{sagemath}
Stein, W.A., et al.: Sage Mathematics Software (Version
  7.1), The Sage Development Team, 2016,
  \url{http://www.sagemath.org}

\bibitem{tsfasman-vladut95}
Tsfasman, M.A., Vl{\u a}du{\c t}, S.G.: Geometric approach to higher weights.
  %IEEE Transactions on Information Theory
   IEEE Trans. Inform. Theory 
  41,  1564--1588 (1995)

\bibitem{ward2001divisible_survey}
Ward, H.: Divisible codes---a survey.
%Serdica Mathematical Journal 
Serdica Math. J.
27(4), 263--278 (2001)

\end{thebibliography}
% \bibdata{partial_spreads_and_vsp_rev}
% %\bibliographystyle{amsplain}
% \bibliographystyle{splncs03}
% \end{document}

\def\cprime{$'$}

\end{document}